\documentclass{article}
\usepackage[round]{natbib}
\usepackage{amsmath,amsfonts,amssymb,amsthm}
\usepackage{paralist}

\renewcommand{\le}{\leqslant}
\renewcommand{\ge}{\geqslant}

\newtheorem{theorem}{Theorem}
\newtheorem{lemma}{Lemma}
\newtheorem{proposition}{Proposition}

\newcommand{\rd}{\,\mathrm{d}}

\newcommand{\real}{\mathbb{R}}

\newcommand{\e}{\mathbb{E}}
\newcommand{\var}{\mathrm{Var}}
\newcommand{\cov}{\mathrm{Cov}}
\newcommand{\wh}{\widehat}

\newcommand{\bsx}{\boldsymbol{x}}

\newcommand{\simiid}{\stackrel{\mathrm{iid}}\sim}

\newcommand{\giv}{\!\mid\!}

\author{Art B. Owen \\ Stanford University
\and
Yi Zhou
}
\title{The square root rule for adaptive importance sampling}
\date{March 2019}

\begin{document}
\maketitle
\begin{abstract}
In adaptive importance sampling, and other contexts, we have $K>1$
unbiased and uncorrelated estimates $\hat\mu_k$ of a common quantity $\mu$. 
The optimal unbiased linear combination weights them inversely
to their variances but those weights are unknown and hard to estimate.
A simple deterministic square root rule based on a working model that $\var(\hat\mu_k)\propto k^{-1/2}$
gives an unbisaed estimate of $\mu$ that is nearly optimal under
a wide range of alternative variance patterns.
We show that if $\var(\hat\mu_k)\propto k^{-y}$
for an unknown rate parameter $y\in [0,1]$ then
the square root rule yields the optimal variance rate
with a constant that is too large by at most $9/8$ for any $0\le y\le 1$ and any number
$K$ of estimates.  Numerical work shows that rule is similarly robust to some other
patterns with mildly decreasing variance as $k$ increases.
\end{abstract}

\section{Introduction}

A useful rule for weighting uncorrelated estimates appears in
an unpublished technical report \citep{mixbeta}.
This paper states and proves that result and discusses some
mild generalizations.
The motivating context for the problem is adaptive importance sampling.
	
Importance sampling is a fundamental Monte Carlo method
used in finance \citep{glas:2004},
reliability \citep{au:beck:1999,aloe}, coding theory \citep{elvi:sant:2019},
particle transport \citep{kong:span:2011,
koll:bagg:cox:pica:1999}, computer graphics \citep{veac:guib:1995,phar:jako:hump:2016},
queuing \citep{blan:glyn:liu:2007}, and sequential analysis \citep{lai:sieg:1977},
among other areas.

The prototypical problem is to estimate
a constant $\mu =\int f(\bsx) p(\bsx)\rd\bsx$
for a density function $p$ and integrand $f$. Although we illustrate the
problem with continuous distributions, discrete distributions are handled similarly.
In importance sampling, we might estimate $\mu$ by 
\begin{align}\label{eq:hatmu}
\hat\mu = \frac1n\sum_{i=1}^n\frac{f(\bsx_i)p(\bsx_i)}{q(\bsx_i;\theta)},\quad  
\text{for $\bsx_i\simiid q(\cdot;\theta)$}. 
\end{align} 
The density function $q$ must satisfy
$q(\bsx;\theta)>0$ whenever $f(\bsx)p(\bsx)\ne 0$,
in order to ensure that $\e(\hat\mu)=\mu$.
In this setting, we must also choose a parameter $\theta$.

A common alternative method is self-normalized importance 
sampling with
\begin{align}\label{eq:tildemu}
\tilde\mu = 
\sum_{i=1}^n\frac{f(\bsx_i)p(\bsx_i)}{q(\bsx_i;\theta)}
\Bigm/\sum_{i=1}^n\frac{p(\bsx_i)}{q(\bsx_i;\theta)}
,\quad 
\text{for $\bsx_i\simiid q(\cdot;\theta)$}. 
\end{align}
This method is more restrictive in requiring $q(\bsx;\theta)>0$ whenever
$p(\bsx)>0$ but less restrictive in 
that it allows $p$ or $q$ or
both to be  unnormalized, i.e., known only up to a constant
of proportionality. We focus primarily on $\hat\mu$, with
a few remarks about $\tilde\mu$.

After choosing $\theta$ and getting $\bsx_1,\dots,\bsx_n$ we might well decide
that some other value of $\theta$ would have been better.  For instance we might
think that some other $\theta$ would yield an estimate of $\mu$ with lower variance.
In adaptive importance sampling
we estimate $\mu$ by $\hat\mu_k$ at iteration $k\ge1$ using some parameter $\theta_k$.
Then we may use all the sample values from the first $k$ iterations to select $\theta_{k+1}$
for the next round.

In an adaptive method with one pilot estimate and one final estimate, $K=2$
and $n$ is usually large.
In other settings, each $\hat\mu_k$ could be based on just one
evaluation of the integrand 
(e.g., \cite{ryu:boyd:2014})
and then $K$ would typically be very large.
In intermediate settings we might have something like $K=10$ estimates
based on perhaps thousands of evaluations each.
For instance, the cross-entropy method \citep{debo:kroe:mann:rubi:2005}
might be used this way.
The total sample size used is $N=nK$. We remark briefly on the
possibility that the value of $n$ varies with $k$ in 
Section~\ref{sec:gene}.

To specify an adaptive importance sampling method, one must come up with
a family $q(\cdot;\theta)$ of distributions, a starting value $\theta_1$, 
a choice for $n$,
an algorithm for computing $\theta_{k+1}$ from
the data of the prior $k$ rounds and a termination rule.
This paper addresses a different issue: after all those choices have
been made, how should one pool estimates
$\hat\mu_1,\dots,\hat\mu_K$ to get the combined estimate $\hat\mu$?
We have two  main points to make:
\begin{compactenum}[\quad\bf1)]
\item a natural approach using inverse sample variances to weight $\hat\mu_k$ is risky, and
\item a simple deterministic weighting scheme is nearly optimal over a broad range of 
reasonable conditions.
\end{compactenum}
We will make the first point with some qualitative arguments and citations.

Most of the paper
is devoted to the second point showing that a simple
deterministic rule is nearly optimal
over a set of assumptions ranging from what we consider unduly pessimistic to what 
we consider unduly optimistic.
To that end we adopt a working model in which $\var(\hat\mu_k)\propto k^{-1/2}$.
Using this working model we get an estimate $\hat\mu$ satisfying $\e(\hat\mu)=\mu$
along with an unbiased estimate of $\var(\hat\mu)$.
The working model will ordinarily be incorrect and the consequence
of that incorrectness is a larger value of $\var(\hat\mu)$ than would have been attained
using the unknown true variances.  We give conditions under which the
resulting inefficiency is mild enough to be negligible.  That lets one avoid
a potentially severe challenge of estimating the step by step variances
and then analyzing a complicated rule derived from those estimates.

This paper is organized as follows. 
Section~\ref{sec:plausible} gives reasons to study settings where
$\var(\hat\mu_k)$ will decay steadily but
not dramatically, and with diminishing returns as $k$ increases.
Section~\ref{sec:nota} presents our notation and 
explains why we can assume that the estimates $\hat\mu_k$
are unbiased and uncorrelated.
Section~\ref{sec:linear} presents our linear combination 
estimator, discusses why it is safer to use a prespecified 
linear combination of $\hat\mu_k$ than to use estimated
variances, and gives an expression 
for the relative inefficiency that stems from using a weighting
derived from a potentially incorrect power law.
Section~\ref{sec:squa} states and proves our main result using 
two lemmas. It is that the square root rule never 
raises the variance by more than $9/8$ times the optimal 
variance for any setting where $\var(\hat\mu_k)\propto k^{-y}$
for any $0\le y\le 1$ and any number $K\ge1$ of steps. 
It is accurate to within that constant factor even though
the unknown optimal $\var(\hat\mu)$ converges at rates
ranging from $O(N^{-1})$ to $O(N^{-2})$.
Section~\ref{sec:robust} considers some alternative ways to model 
steady improvement with diminishing returns.  We find numerically 
that weighting proportionally to $k^{1/2}$ loses little accuracy 
under very general circumstances, although some variance 
patterns give worse than a $9/8$ variance ratio. 
Section~\ref{sec:gene} includes some remarks on how to
weight the $\hat\mu_k$ in settings where even better convergence rates, including 
exponential convergence, apply. 
Section~\ref{sec:conc} has brief conclusions.

\section{Plausible convergence rates}\label{sec:plausible}

The key assumption we need to make is about how quickly the statistical efficiency
of the individual estimates $\hat\mu_k$ increases with $k$.
Very commonly $f(\bsx)\ge0$ and then $f(\bsx)p(\bsx)/\mu$ is a probability
density function.  If we could sample from that density,
then we would have a zero variance estimate $\hat\mu$.
In special circumstances, $f(\bsx)p(\bsx)/\mu=q(\bsx;\theta)$
for some finite dimensional parameter vector $\theta$
in our family of densities.
It may then be possible to get good estimates of 
$\theta_k$ that improve rapidly as $k$ increases,  with the effect that $\var(\hat\mu_k)$ decreases 
exponentially fast to $0$ as $k$ increases.
This situation is very well studied in the literature. See 
\cite{koll:bagg:cox:pica:1999} and \cite{kong:span:2011}
and \cite{lecu:tuff:2008}.
We consider it to be especially favorable and not representative
of importance sampling in general.

In other settings $f(\bsx)p(\bsx)/\mu$ does not take the form $q(\bsx;\theta)$
for any $\theta$ and then $\hat\mu_k$ cannot have zero variance for finite $n$.
It remains possible that $\theta_k$ approaches an optimal vector
and then $\var(\hat\theta_k)$ can improve towards a positive lower bound
but no further.
Exponential convergence is ruled out a fortiori.

The self-normalized estimator $\tilde\mu_k$ cannot
approach zero variance for fixed $n$ outside of trivial settings where $f(\bsx)$
is constant under $\bsx\sim p$.
It is a ratio estimator and the asymptotically optimal 
$q$ is proportional to $|f(\bsx)-\mu|p(\bsx)$.
Any self-normalized estimator satisfies
$$
\lim_{n\to\infty} n\var(\tilde\mu) \ge \biggl(\int |f(\bsx)-\mu|p(\bsx)\rd\bsx\biggr)^2. 
$$
See the Appendix of \cite{aloe} which uses 
the optimal self-normalized importance sampler 
found by \cite{kahn:mars:1953}, also given in \cite{hest:1988}. 
When $f(\bsx)$ is the indicator of an event with probability $\epsilon\in(0,1)$
under $\bsx\sim p$, then the asymptotically optimal self-normalized
sampler has $\Pr(f(\bsx)=1;\bsx\sim q)=1/2$, and any self-normalized importance
sampler has
$$
\lim_{n\to\infty} n\var(\tilde\mu) \ge 4\epsilon^2(1-\epsilon)^2.
$$
As a consequence, any nontrivial instance of the self-normalized importance
sampler must also have a nonzero floor on its mean squared error for fixed $n$.
This rules out exponential convergence for
self-normalized importance sampling in non-trivial problems.

In a third possibility, $q(\bsx;\theta)$ is an infinite dimensional
family of densities in which $p(\bsx)f(\bsx)$ can be approached
to any desired level of accuracy by some choice of~$\theta$.
Following up on a strategy of \cite{west:1993},
the nonparametric importance sampler
of~\cite{zhan:1996} has $K=2$ steps.
The second step is an importance sample from a kernel density estimate
of $p(\bsx)f(\bsx)/\mu$ obtained using the first step as a pilot sample.
He gives conditions under which the mean squared error at the 
second stage is $O(N^{-(d+8)/(d+4)})$ for $\bsx\in\real^d$.
A multistep adaptive algorithm attains the same rate in $N$ with a
different constant that is not necessarily better than using $K=2$.

We are targetting problems where exponential convergence
is not possible. We assume that
$\var(\hat\mu_k)$ reduces steadily with $k$ but not spectacularly so
and most of our analysis assumes diminishing returns as $k$ keeps increasing.
Our main model for steady variance reduction is $\var(\hat\mu_k)\propto k^{-y}$
for an unknown value of $y$.
While the actual variance won't follow this pattern exactly we expect
that this pattern is qualitatively similar to realistic progress and
it is amenable to sharp analysis.
Our combined estimate is a linear combination of $\hat\mu_k$.
The resulting variance is a linear combination of $\var(\hat\mu_k)$
and so mild departures from a $k^{-y}$ pattern will have little
effect on the conclusion.

We consider the case with $y=0$ to be unduly pessimistic.  It describes
a setting where adaptation brings no benefit whatsoever.
We consider $y=1$ to be unduly optimistic. While it is not as favorable
as the exponential case described above, we show below that it yields
an asymptotic mean squared error of $O(N^{-2})$. That is better
than the nonparametric method can attain in even one dimensional
problems and it is much better than we can attain when $p(\bsx)f(\bsx)/\mu$
cannot be approximated arbitrarily well with some $q(\bsx;\theta)$.

When $f(\bsx)p(\bsx)/\mu$ cannot be arbitrarily well approximated by $q(\bsx;\theta)$ for any $\theta$
then it is not actually possible to have $\var(\hat\mu_k)\propto k^{-y}$ for any $y>0$ and all $k\ge1$.
This exponential decay remains a reasonable working model for initial transients over $1\le k\le K$
for some finite $K$.

Our proposed estimate is to weight the interates $\hat\mu_k$
as if $y=1/2$.  The result is $\hat\mu = \sum_{k=1}^Kk^{1/2}\hat\mu_k
\sum_{k=1}^Ki^{1/2}$, a square root rule.

\section{Notation}\label{sec:nota}

Step $k$ of our adaptive importance sampler
generates data $X_k$. This includes all of the sample points $\bsx_{ki}$
along with the function evaluations $f(\bsx_{ki})$, $p(\bsx_{ki})$ and $q(\bsx_{ki};\theta_k)$
at that step.  It can also include $q(\bsx_{ki};\theta')$ for values $\theta'\ne \theta_k$.
We let $Z_k = (X_1,\dots,X_{k-1})$
denote the data from all steps prior to the $k$'th, with $Z_1$ being empty.
We assume that our estimates $\hat\mu_k$ satisfy
\begin{align}\label{eq:unbiased}
\e(\hat\mu_k\giv Z_k)=\mu,\quad
\text{and}\quad \var(\hat\mu_k\giv Z_k)=\sigma_k^2<\infty,\quad
 k=1,\dots,K.
\end{align}
This is a reasonable assumption for the estimates in equation~\eqref{eq:hatmu},
though we are ruling out extreme settings with $\sigma^2_k=\infty$.
The self-normalized estimate $\tilde\mu$ of~\eqref{eq:tildemu} does not satisfy~\eqref{eq:unbiased}
because as a ratio estimator it incurs a bias of size $O(1/n)$. For large $n$, the bias
might be small enough that  equation~\eqref{eq:unbiased}
becomes a good  approximation for the self-normalized case also.

For the regular importance sampler in equation~\eqref{eq:hatmu},
\begin{align}\label{eq:varmuhatk}
\var(\hat\mu_k) = \e(\var(\hat\mu_k\giv Z_k)) +
\var(\e(\hat\mu_k\giv Z_k)) = \e(\sigma^2_k)\equiv\tau^2_k.
\end{align}
There is ordinarily an unbiased estimate of $\sigma^2_k$, such as
$$
\hat\sigma^2_k = \frac1{n-1}\sum_{i=1}^n\Bigl(\frac{ f(\bsx_i)p(\bsx_i)}{q(\bsx_i;\theta_k)}-\hat\mu_k\Bigr)^2.
$$
Under some moment conditions, $\e(\hat\sigma^2_k\giv Z_k)=\sigma^2_k$ and then
$$
\e(\hat\sigma^2_k) = \e(\sigma_k^2) = \tau^2_k.
$$
That is, $\hat\sigma^2_k$ is simultaneously an unbiased estimate of both
the random variable $\var(\hat\mu_k\giv Z_k)$ and the constant $\var(\hat\mu_k)$.

In \cite{mixbeta}, $\hat\mu_k$ was an importance sampled
estimate of an integral over the unit cube, sampling from a mixture
of products of beta distributions. Then $\theta_k$ included the 
mixture components and beta distribution parameters used at step $k$.
That paper considers a synthetic integrand formed by a mixture of Gaussians
and an integrand with positive and negative singularities from particle physics.

The estimates $\hat\mu_k$ are, in general, dependent because data sampled at step
$k$ influences the choice of $\theta_\ell$ for $\ell>k$ and thereby affects $\hat\mu_\ell$.
These estimates are uncorrelated because
\begin{align*}
\cov(\hat\mu_k,\hat\mu_\ell)  & = \e( \e( (\hat\mu_k-\mu) (\hat\mu_\ell-\mu)\giv Z_{\ell})) \\
&= \e( (\hat\mu_k-\mu) \e(\hat\mu_\ell-\mu\giv Z_{\ell}))=0. 
\end{align*}
The underlying idea in the above expressions is that
$\sum_{k=1}^L\omega_k(\hat\mu_k-\mu)$ is a martingale in $L$
\citep{will:1991}. We will not make formal use of martingale arguments.

\section{Linear combinations}\label{sec:linear}

A simple strategy is to estimate $\mu$ by 
$$\hat\mu=\sum_{k=1}^K\omega_k\hat\mu_k,\quad\text{where}\quad
\sum_{k=1}^K\omega_k=1.$$
For deterministic $\omega_k$, we have $\e(\hat\mu) = \mu$
with $\var(\hat\mu) = \sum_{k=1}^K\omega_k^2\tau_k^2$
and then $\wh\var(\hat\mu)=\sum_{k=1}^K\omega_k^2\hat\sigma^2_k$
is an unbiased estimate of $\var(\hat\mu)$ even if $\var(\hat\mu_k)$
is not proportional to $\omega_k^{-1}$.

To minimize $\var(\hat\mu)$, we should take $\omega_k\propto \var(\hat\mu_k)^{-1}$. 
The problem is that we do not know $\var(\hat\mu_k)$. 
It is not advisable to plug in the unbiased estimates $\hat\sigma^2_k$
and take $\omega_k\propto\hat\sigma^{-2}_k$, for several reasons that we outline next.

The importance sampling context is often one where $f(\bsx)$ is
the indicator of a very rare event.  
For background on importance sampling for rare events, see~\cite{lecu:mand:tuff:2009}. 
Another common setting is for $f(\bsx)$
to be a nonnegative random variable with a very heavy right tail, perhaps
even an integrable singularity. Both of these cases describe a setting
where $f(\bsx)$ has extremely large positive skewness under $\bsx\sim p$.
We then expect to get $f(\bsx)p(\bsx)/q(\bsx)$ with large positive skewness
under $\bsx\sim q$.  \cite{evan:swar:1995} describe this as low effective
sample size for the integrand $f$.
The more general low effective sample size problem
shows up as extreme positive skewness in $p(\bsx)/q(\bsx)$
under $\bsx\sim q$.

In cases of positive skewness, we get a positive correlation between
the sample values of $\hat\mu_k$ and $\hat\sigma^2_k$ \citep{mill:1986}.
Weighting points proportionally to $\hat\sigma^{-2}_k$ then means
that the largest $\hat\mu_k$ tend to be randomly downweighted while
the smallest ones tend to be randomly upweighted.  The result is a negative bias
in $\hat\mu$, which is especially undesirable when $\mu$ is the probability of a rare adverse event.
For a rare event that fails to happen even once at step $k$ we could
get $\hat\mu_k=\hat\sigma^2_k=0$ and then some complicated
intervention would be required to make the formulas work.
It is possible that some settings will have negative skewness for $fp/q$ and a 
corresponding upward bias using estimated variances, though that is less to be expected in importance sampling.
This could happen for problems with negative integrable singularities or problems
with both positive and negative integrable singularities or when estimating the
probability of an event whose complement is rare.

In addition to the bias issue there is a possibly more fundamental reason 
not to use $\hat\sigma^2_k$ in weights.  Importance sampling is often 
used in settings where simply estimating $\mu$ is quite hard. 
Estimating $\sigma^2_k$ or $\tau^2_k$ well could then be an order 
of magnitude harder.  They only have finite variances under a finite 
fourth moment condition for $fp/q$.  Using estimates of those 
quantities then amounts to using an extremely noisy weighting 
relying on a subsidiary problem (variance estimation) that is 
harder than the motivating problem.  
See~\cite{lecu:blan:tuff:glyn:2010} for an analysis of how 
hard it is to estimate variance in the importance sampling context. 
This issue is especially 
severe in cases with large $K$ and small~$n$. 
\cite{chat:diac:2018} also remark on the difficulty of using 
variances in importance sampling.

The AMIS algorithm of 
\cite{corn:mari:mira:robe:2012} uses a sample determined weighted combination of estimates.
In that case the weight applied to $\hat\mu_k$ can depend 
on observations from future rounds in a more complicated way 
than just normalizing $\hat\sigma^{-2}_k$.  
The intricate dependence pattern between weights $\hat\omega_k$ and estimates $\hat\mu_k$
complicates even the task of proving consistency of the estimator.
Their setup was for self-normalized importance sampling but 
the difficulties with noisy estimates of optimal weights apply there too.
Even without that complexity taking account of sampling fluctations
in both $\hat\mu_k$ and $\hat\omega_k$ can raise challenging issues
that a deterministic rule avoids.

We are interested in cases where adaptation brings steady
but not dramatic improvements.
As noted above, our model for that is
\begin{align}\label{eq:truevar}
\var(\hat\mu_k) = \tau^2 k^{-y}
\end{align}
for $0\le y\le 1$ and $\tau=\tau_1\in(0,\infty)$. 
If we worked with $\tau^2 k^{-x}$ then our estimate would be
$$
\hat\mu =\hat\mu(x) = \sum_{i=1}^Ki^{x}\hat\mu_i\Bigm/\sum_{i=1}^Ki^{x}.
$$
The best choice is $\hat\mu(y)$ but $y$ is unknown. Our variance using $x$ is
$$
\var(\hat\mu(x)) = \tau^2\sum_{i=1}^Ki^{2x-y}\Bigm/\Biggl(\,\sum_{i=1}^Ki^{x}\Biggr)^2 
$$
and we measure the inefficiency of our choice by
\begin{align}\label{eq:ineff}
\rho_K(x\giv y) \equiv \frac{\var(\hat\mu(x))}{\var(\hat\mu(y))}
=
\frac{\bigl(\sum_{i=1}^Ki^{2x-y}\bigr)\bigl(\sum_{i=1}^Ki^{y}\bigr)}
{\bigl(\sum_{i=1}^Ki^{x}\bigr)^2}. 
\end{align}

If the variance of $\hat\mu_k$ decays as $k^{-y}$ and, knowing that, we use
$x=y$, then $\var(\hat\mu) = O(K^{-y-1})=O(N^{-y-1})$.  
The value $y=0$ is pessimistic as it corresponds to a setting where adaptation brings no
benefits.
For an optimistic value $y=1$, the variance decays at the
rate $O(N^{-2})$ which we consider unreasonably optimistic.
It is better than the rate that \cite{zhan:1996} gets for importance
sampling by one dimensional kernel density estimates
and even slightly better than the rate
which holds for randomly shifted lattice rules applied to
functions of bounded variation in the sense of Hardy and Krause.
See \cite{lecu:lemi:2000} for background on randomized lattice rules.

\section{Square root rule}\label{sec:squa}

\cite{mixbeta} proposed to split the difference between
 the optimistic estimate $\hat\mu(1)$ and the pessimistic
one $\hat\mu(0)$ by using $\hat\mu(1/2)$.
The result is an unbiased estimate that attains the
same convergence rate as the unknown best estimate
with only a modest penalty in the constant factor.

\begin{theorem}\label{thm:sqrtrule}
For $\rho_K(x\giv y)$ given by equation~\eqref{eq:ineff},
\begin{align}\label{eq:ninebyeight}
\sup_{1\le K<\infty}\,\sup_{0\le y\le 1}\rho_K\Bigl(\frac12\bigm| y\Bigr)\le \frac98.
\end{align}
If $x\ne 1/2$ and $K\ge2$ then
\begin{align}\label{eq:halfisminimax}
\sup_{0\le y\le1}\rho_K(x\giv y)>\sup_{0\le y\le1}\rho_K\Bigl(\frac12\bigm| y\Bigr).
\end{align}
\end{theorem}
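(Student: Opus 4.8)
The plan is to strip both claims down to one-dimensional problems using two structural facts: log-convexity of the partial sums in their exponent, and monotonicity of a tilted mean. Throughout I write $S_K(a)=\sum_{i=1}^K i^a$, so that by \eqref{eq:ineff} we have $\rho_K(x\giv y)=S_K(2x-y)S_K(y)/S_K(x)^2$, and I record the elementary but crucial observation that $a\mapsto\log S_K(a)=\log\sum_{i=1}^K e^{a\log i}$ is convex, being the logarithm of a sum of exponentials (a cumulant generating function).

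First I would reduce the supremum over $y$ to an endpoint. For fixed $x$,
\[
\log\rho_K(x\giv y)=\log S_K(2x-y)+\log S_K(y)-2\log S_K(x)
\]
is a sum of two convex functions of $y$ (the first precomposed with the affine map $y\mapsto 2x-y$), hence convex on $[0,1]$, so its maximum is attained at $y\in\{0,1\}$. Thus $\sup_{0\le y\le1}\rho_K(x\giv y)=\max\{\rho_K(x\giv0),\rho_K(x\giv1)\}$. When $x=1/2$ the affine map is $y\mapsto 1-y$, giving the symmetry $\rho_K(1/2\giv y)=\rho_K(1/2\giv 1-y)$, so the two endpoints coincide and $\sup_y\rho_K(1/2\giv y)=\rho_K(1/2\giv0)=K\,S_K(1)/S_K(1/2)^2=K^2(K+1)/\bigl(2\,S_K(1/2)^2\bigr)$.

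With this reduction, \eqref{eq:ninebyeight} becomes the single-sequence inequality $\sup_{K\ge1}K^2(K+1)/\bigl(2S_K(1/2)^2\bigr)\le 9/8$, equivalently $\sum_{i=1}^K\sqrt i\ge \tfrac23 K\sqrt{K+1}$. I expect this to be the main obstacle, since the bound is \emph{asymptotically tight}: from $S_K(a)\sim K^{a+1}/(a+1)$ one gets $\rho_K(1/2\giv y)\to 9/\bigl(4(2-y)(1+y)\bigr)$, a function whose maximum on $[0,1]$ is exactly $9/8$, attained at $y\in\{0,1\}$, so any lower bound on $\sum\sqrt i$ must be correct to leading order. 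I would supply it with the Hermite--Hadamard midpoint inequality for the concave integrand, $\sqrt i\ge\int_{i-1/2}^{i+1/2}\sqrt t\,\rd t$, which sums to $S_K(1/2)\ge\tfrac23\bigl[(K+\tfrac12)^{3/2}-(\tfrac12)^{3/2}\bigr]$; the remaining elementary inequality $(K+\tfrac12)^{3/2}-K\sqrt{K+1}\ge(\tfrac12)^{3/2}$ has an increasing left side and is checked at small $K$ by hand.

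Finally, for the minimax statement \eqref{eq:halfisminimax}, set $\phi(x)=\rho_K(x\giv0)=K\,S_K(2x)/S_K(x)^2$ and $\psi(x)=\rho_K(x\giv1)=S_K(1)\,S_K(2x-1)/S_K(x)^2$, so by the first step $\sup_y\rho_K(x\giv y)=\max\{\phi(x),\psi(x)\}$ with $\phi(1/2)=\psi(1/2)=\sup_y\rho_K(1/2\giv y)$. Let $m(a)=S_K'(a)/S_K(a)=\sum i^a\log i\big/\sum i^a$ be the mean of $\log i$ under the tilted law $p_a\propto i^a$; its derivative is $\var_{p_a}(\log i)$, which is strictly positive once $K\ge2$, so $m$ is strictly increasing. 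Then $(\log\phi)'=2\bigl[m(2x)-m(x)\bigr]>0$ for $x>0$ while $(\log\psi)'=2\bigl[m(2x-1)-m(x)\bigr]<0$ for $x<1$; hence $\phi$ strictly increases and $\psi$ strictly decreases through their common crossing at $x=1/2$. For $x>1/2$ this gives $\sup_y\rho_K(x\giv y)\ge\phi(x)>\phi(1/2)$, and for $x<1/2$ it gives $\ge\psi(x)>\psi(1/2)$, covering every real $x\ne 1/2$ and yielding \eqref{eq:halfisminimax}.
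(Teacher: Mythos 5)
Your proof is correct, and it takes a genuinely different route in the two places where the paper's proof is most delicate. The endpoint reduction in $y$ is essentially Lemma~\ref{lem:supovery}: the paper verifies convexity by computing $\partial^2\rho_K/\partial y^2$ directly, while you get it from the log-sum-exp (cumulant generating function) structure of $a\mapsto\log\sum_i i^a$; both arguments then use the symmetry at $x=1/2$. The real divergence is in how the $9/8$ bound is reached. The paper proves the delicate Lemma~\ref{lem:monok}, that $\rho_K(1/2\giv 1)$ is strictly increasing in $K$ (a case analysis for $K>7$ plus direct numerical checks for $1\le K\le 7$), and then identifies the supremum over $K$ with the $K\to\infty$ limit, evaluated as $9/8$ via Proposition~\ref{prop:ibounds}. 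You instead prove the uniform finite-$K$ inequality $\sum_{i=1}^K i^{1/2}\ge\frac23 K\sqrt{K+1}$ from the same midpoint bound~\eqref{eq:inthigh} together with the elementary inequality $(K+\tfrac12)^{3/2}-K\sqrt{K+1}\ge 2^{-3/2}$; your unproved assertion that its left side is increasing is true and routine (after clearing denominators and squaring, positivity of the derivative reduces to $\tfrac32 K+\tfrac12>0$), so a single check at $K=1$ closes it. This bypasses Lemma~\ref{lem:monok} and the limiting argument entirely and requires no numerical verification; what it gives up is the monotonicity-in-$K$ information (the fact that finite $K$ is strictly better than the limit), though your asymptotic remark recovers sharpness of the constant. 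For the minimax claim, your tilted-mean argument ($m(a)=\sum_i i^a\log i/\sum_i i^a$ strictly increasing because its derivative is a variance) is the same mechanism as the paper's rearrangement argument for the sign of~\eqref{eq:thatderiv}, but packaged so that $x<1/2$ and $x>1/2$ are handled symmetrically (the paper treats $x<1/2$ and declares the other side similar), and it needs only $\max\{\phi(x),\psi(x)\}\ge\phi(x)$ or $\ge\psi(x)$ rather than knowing which endpoint attains the supremum, so it in fact covers every real $x\ne1/2$ rather than only $x\in[0,1]$.
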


\smallskip
Equation~\eqref{eq:ninebyeight} shows that the square root rule
has at most $12.5\%$ more variance than the unknown best rule.
Equation~\eqref{eq:halfisminimax} shows that the choice $x=1/2$
is the unique minimax optimal one, when $K>1$.
When $K=1$ then $\hat\mu(x)$ does not depend on $x$
and in that case, $\hat\mu(x)=\hat\mu(y)$.
Before proving the theorem, we establish two lemmas.

\begin{lemma}\label{lem:supovery}
For $\rho_K(x\giv y)$ given by equation~\eqref{eq:ineff},
with $K\ge1$ and $0\le x\le1$, 
$$\sup_{0\le y\le1} \rho_K(x\giv y)=
\begin{cases}
\rho_K(x\giv 1), & x\le1/2\\
\rho_K(x\giv 0), & x\ge1/2. 
\end{cases}
$$
\end{lemma}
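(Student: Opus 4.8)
The plan is to exploit that the denominator of $\rho_K(x\giv y)$ in \eqref{eq:ineff} does not involve $y$, so maximizing $\rho_K(x\giv y)$ over $y\in[0,1]$ reduces to maximizing the numerator
\begin{align*}
N(y) \equiv A(y)\,B(y),\quad A(y)=\sum_{i=1}^Ki^{2x-y},\quad B(y)=\sum_{i=1}^Ki^{y}.
\end{align*}
The first thing I would record is a symmetry: replacing $y$ by $2x-y$ interchanges the exponents $2x-y$ and $y$, hence swaps $A$ and $B$, giving $N(2x-y)=N(y)$. Thus $N$ is symmetric about the axis $y=x$, and since $x\in[0,1]$ this axis lies in the interval of interest.

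Next I would show that $\log N(y)=\log A(y)+\log B(y)$ is convex in $y$ on all of $\real$. For $A$, writing $w_i=i^{2x-y}>0$ we have $A'(y)=-\sum_i(\log i)w_i$ and $A''(y)=\sum_i(\log i)^2w_i$, so
\begin{align*}
(\log A)''(y)=\frac{A''A-(A')^2}{A^2}=\var_p(\log i)\ge0,
\end{align*}
where $p_i=w_i\big/\sum_jw_j$; this is just the Cauchy--Schwarz inequality in its variance form. The identical computation with $i^{y}$ shows $\log B$ is convex, and the sum of convex functions is convex. This log-convexity is the analytic heart of the argument, though it is a routine calculation. By convexity on $[0,1]$ the maximum of $N$, and hence of $\rho_K(x\giv\cdot)$, is attained at an endpoint $y\in\{0,1\}$.

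Finally I would combine the two facts to decide \emph{which} endpoint. A function that is convex and symmetric about $y=x$ is non-decreasing in $|y-x|$: setting $h(t)=\log N(x+t)$, the symmetry makes $h$ even and the convexity makes it convex, and an even convex function is non-decreasing on $[0,\infty)$ (any $t_1\in[0,t_2]$ is a convex combination of $\pm t_2$, at which $h$ takes the common value $h(t_2)$, so $h(t_1)\le h(t_2)$). Hence $N$ is largest at whichever endpoint is farther from $x$. Since $|0-x|=x$ and $|1-x|=1-x$, the endpoint $y=1$ is (weakly) farther when $x\le1/2$ and $y=0$ is (weakly) farther when $x\ge1/2$, which is exactly the claimed dichotomy. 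I expect the one place that needs genuine care is this last comparison of the two endpoints, where the symmetry about $y=x$ is essential; the case $K=1$ is trivial, since then $N\equiv1$ and both sides equal $1$.
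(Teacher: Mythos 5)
Your proof is correct and follows essentially the same route as the paper: both arguments rest on convexity of $\rho_K(x\giv\cdot)$ in $y$ (you via log-convexity of the numerator through the Cauchy--Schwarz/variance identity, the paper via the explicit second derivative $C^{-2}\sum_{i}\sum_{j}i^{2x-y}j^y(\log i-\log j)^2\ge 0$), combined with the same symmetry $\rho_K(x\giv 2x-y)=\rho_K(x\giv y)$ and the same endpoint comparison showing the supremum sits at whichever of $y=0,1$ is farther from $x$. The only difference is the technical device used to certify convexity, which does not change the structure of the argument.
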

\begin{proof}
The result holds trivially for $K=1$ because $\rho_1(x\giv y)=1$.
Now suppose that $K\ge2$. Then
$$
\frac{\partial^2}{\partial y^2}\rho_K(x\giv y)
= C^{-2}\times \sum_{i=1}^K\sum_{j=1}^Ki^{2x-y}j^y(\log(i)-\log(j))^2
$$
for $C=\sum_{i=1}^Ki^x$. Thus $\rho_K(x\giv y)$ is strictly
convex in $y$ for any $x$ when $K\ge2$, and so
$\sup_{0\le y\le 1}\rho_K(x\giv y)\in \{\rho_K(x\giv1),\rho_K(x\giv0)\}$.

By symmetry, $\rho_K(x\giv 2x-y)=\rho_K(x\giv y)$.
So if $x\ge1/2$, then $\rho_K(x\giv 0)=\rho_K(x\giv 2x)\ge\rho_K(x,1)$.
The last step follows because $2x\ge1$ and $\rho_K(x\giv y)$ is a convex
function of $y$ with its minimum at $y=x\le1$.
This establishes the result for $x\ge1/2$ and a similar argument
holds for $x\le 1/2$. For $x=1/2$, $\rho_K(x\giv0)=\rho_K(x\giv1)$.
\end{proof}

Proposition~\ref{prop:ibounds} has two integral bounds that we will use below.
\begin{proposition}\label{prop:ibounds}
If $0\le x\le1$, and $K\ge1$ is an integer, then
\begin{align}
\sum_{i=1}^{K} i^x&\ge \int_{1/2}^{K+1/2} v^x\rd v = \frac{(K+1/2)^{x+1}-(1/2)^{x+1}}{x+1},\quad\text{and}
\label{eq:inthigh}\\
\sum_{i=1}^K i^x &\le \int_1^{K+1} v^x\rd v = \frac{(K+1)^{x+1}-1}{x+1}. 
\label{eq:intlow}
\end{align}
Equation~\eqref{eq:inthigh} is strict when $0<x<1$.
Equation~\eqref{eq:intlow} is strict when $0<x\le1$.
\end{proposition}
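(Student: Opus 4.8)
The plan is to establish each bound termwise, writing both integrals as sums of integrals over unit-length subintervals and comparing $i^x$ against the corresponding piece. For the lower bound~\eqref{eq:inthigh}, I would split $\int_{1/2}^{K+1/2} v^x\rd v = \sum_{i=1}^K\int_{i-1/2}^{i+1/2} v^x\rd v$ and aim to show $\int_{i-1/2}^{i+1/2} v^x\rd v\le i^x$ for each $i$. Since $v\mapsto v^x$ is concave on $(0,\infty)$ when $0\le x\le1$, this is exactly the statement that the midpoint rule overestimates the integral of a concave function. Equivalently, applying Jensen's inequality to a random variable $V$ uniform on $[i-1/2,i+1/2]$, which has mean $i$, gives $\int_{i-1/2}^{i+1/2} v^x\rd v = \e(V^x)\le(\e V)^x = i^x$. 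Summing over $i=1,\dots,K$ yields~\eqref{eq:inthigh}.

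For the upper bound~\eqref{eq:intlow}, I would similarly write $\int_1^{K+1} v^x\rd v = \sum_{i=1}^K\int_i^{i+1} v^x\rd v$ and show $i^x\le\int_i^{i+1} v^x\rd v$ for each $i$. This follows from monotonicity rather than concavity: the map $v\mapsto v^x$ is nondecreasing for $x\ge0$, so $v^x\ge i^x$ throughout $[i,i+1]$ and hence $\int_i^{i+1} v^x\rd v\ge i^x$. Summing again gives~\eqref{eq:intlow}.

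The remaining work is the strictness bookkeeping, which I expect to be the only delicate point. For~\eqref{eq:inthigh}, the Jensen step is strict precisely when $v\mapsto v^x$ is strictly concave, that is when $0<x<1$; at $x=0$ and $x=1$ the map is affine (constant, respectively linear), each term is an equality, and the summed bound is an equality, matching the claim. For~\eqref{eq:intlow}, the termwise inequality is strict exactly when $v\mapsto v^x$ is strictly increasing on $[i,i+1]$, i.e.\ when $x>0$; at $x=0$ every term is an equality, so the bound is strict for $0<x\le1$ as stated. Keeping these boundary cases straight is the whole subtlety, since the inequalities themselves are elementary consequences of concavity and monotonicity.
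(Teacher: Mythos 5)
Your proof is correct and takes essentially the same route as the paper: the lower bound~\eqref{eq:inthigh} from concavity of $v^x$ (the midpoint-rule overestimate, which you make explicit via Jensen's inequality on unit subintervals) and the upper bound~\eqref{eq:intlow} from monotonicity, with the same strictness cases. You have simply spelled out in detail what the paper states in two sentences.
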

\begin{proof}
Equation~\eqref{eq:inthigh} follows from the concavity of $v^x$.
That concavity is strict for $0<x<1$.
Equation~\eqref{eq:intlow} follows because $v^x\ge i^x$ for
$i\le v\le i+1$ which holds strictly for $x>0$.
\end{proof}
Equation~\eqref{eq:inthigh} is much
sharper than the bound one gets by integrating $v^x$ over $0\le v\le K$.


\begin{lemma}\label{lem:monok}
For $\rho_K(x\giv y)$ given by equation~\eqref{eq:ineff},
$\rho_{K+1}(1/2\giv1)>\rho_K(1/2\giv1)$
holds for any integer $K\ge1$. 
\end{lemma}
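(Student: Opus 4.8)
The plan is to convert the claimed monotonicity into a single scalar inequality for the partial sums $S_K\equiv\sum_{i=1}^K\sqrt i$ and then pin $S_K$ down with the lower bound~\eqref{eq:inthigh} of Proposition~\ref{prop:ibounds}. Putting $x=1/2$, $y=1$ in~\eqref{eq:ineff} and using $\sum_{i=1}^K i^0=K$ and $\sum_{i=1}^K i=K(K+1)/2$ gives the closed form
\begin{align*}
\rho_K\Bigl(\tfrac12\bigm| 1\Bigr)=\frac{K^2(K+1)}{2S_K^2}.
\end{align*}
Hence $\rho_{K+1}(1/2\giv1)>\rho_K(1/2\giv1)$ is equivalent, after clearing the positive denominators and cancelling a factor $K+1$, to $S_K^2(K+1)(K+2)>K^2S_{K+1}^2$, and, taking square roots, to $S_K\sqrt{(K+1)(K+2)}>K\,S_{K+1}$. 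Since $S_{K+1}=S_K+\sqrt{K+1}$, this reads $S_K\bigl[\sqrt{(K+1)(K+2)}-K\bigr]>K\sqrt{K+1}$; because $\sqrt{(K+1)(K+2)}>K$ I may divide through and rationalise to reach the single clean target
\begin{align*}
S_K>R_K\equiv\frac{K\sqrt{K+1}}{\sqrt{(K+1)(K+2)}-K}=\frac{K(K+1)\sqrt{K+2}+K^2\sqrt{K+1}}{3K+2},
\end{align*}
the second equality using $\sqrt{(K+1)(K+2)}-K=(3K+2)/(\sqrt{(K+1)(K+2)}+K)$.

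The only external input I then need is~\eqref{eq:inthigh} with $x=1/2$, which is strict here since $0<x<1$: it gives $S_K>L_K\equiv\tfrac23\bigl[(K+\tfrac12)^{3/2}-(\tfrac12)^{3/2}\bigr]$. So it suffices to prove $L_K\ge R_K$, after which $S_K>L_K\ge R_K$ completes the lemma. Both $L_K$ and $R_K$ behave like $\tfrac23K^{3/2}$, so this is a comparison of a cube-root-type power in $L_K$ against a combination of square-root powers in $R_K$, and the two sides agree to leading order.

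The main obstacle is precisely this inequality $L_K\ge R_K$, because the margin is genuinely thin: $L_K-R_K$ is only of order $K^{1/2}$ and shrinks as $K$ decreases (at $K=1$ one has $S_1=1$, $L_1\approx0.989$ and $R_1\approx0.976$). To make the fractional powers manageable I would first merge the two competing half powers in $R_K$ through the elementary estimate $\sqrt{(K+2)/(K+1)}<1+1/(2(K+1))$, which bounds $R_K$ above by $K(4K+3)\sqrt{K+1}/(2(3K+2))$ and leaves a single $\sqrt{K+1}$ to set against the $(K+\tfrac12)^{3/2}$ in $L_K$. Isolating that $3/2$ power and squaring (treating the small additive constant $(\tfrac12)^{3/2}$ separately, since it can be bounded crudely) then reduces everything to a polynomial inequality in $K$ that holds once $K$ is large enough. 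Because the merged bound is slightly lossy, the finitely many small $K$ are checked directly from the definition of $S_K$; for instance $S_1=1>R_1$ settles $K=1$ at once. A tidier-looking alternative is induction on $K$: the base case is $S_1>R_1$, and the step closes once one shows $R_{K+1}-R_K<\sqrt{K+1}=S_{K+1}-S_K$. But this again compares two quantities of size $\sqrt K$ that differ only at order $K^{-1/2}$, so it carries the same essential difficulty rather than removing it.
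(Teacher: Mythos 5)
Your reduction is correct as far as it goes, and its architecture is essentially the paper's: the closed form $\rho_K(1/2\giv 1)=K^2(K+1)/(2S_K^2)$, the equivalence of the lemma to $S_K>R_K$ with $R_K=\bigl(K(K+1)\sqrt{K+2}+K^2\sqrt{K+1}\bigr)/(3K+2)$, the strict lower bound $S_K>L_K$ from~\eqref{eq:inthigh}, and the plan of handling large $K$ algebraically while checking finitely many small $K$ directly all check out (the paper does exactly this, working with the ratio $\rho_{K+1}/\rho_K$ rather than the difference). Your auxiliary bound $R_K<R_K'\equiv K(4K+3)\sqrt{K+1}/(2(3K+2))$, obtained from $\sqrt{(K+2)/(K+1)}<1+1/(2(K+1))$, is also correct.

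The gap is that the one step carrying all the analytic content --- the inequality $L_K\ge R_K'$ beyond some threshold --- is asserted rather than proved, and the sketch for it is not yet sound. First, a single squaring does not ``reduce everything to a polynomial inequality'': after isolating the $3/2$ power you must square $R_K'+\tfrac{2}{3}(1/2)^{3/2}$, and the cross term between $R_K'$ and the constant still contains $\sqrt{K+1}$; you need either a second squaring or a rational majorant for that radical. Second, ``bounding the constant crudely'' is precisely where this can fail, because the margin is lower order than the quantities involved: $L_K-R_K'\approx\sqrt{K}/9$ while both sides are of order $K^{3/2}$, and indeed $L_1\approx0.9890<R_1'\approx0.9899$, so the merged inequality is already false at $K=1$. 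A natural crude step such as $\tfrac{2}{3}(1/2)^{3/2}=\sqrt{2}/6\le\sqrt{K+1}/6$ introduces a loss of order $\sqrt{K}/6$, which exceeds the $\sqrt{K}/9$ margin and sinks the argument; only a sufficiently tight rational bound survives. Third, you never identify the threshold for ``$K$ large enough,'' so the set of small cases to verify directly is indeterminate --- you check only $K=1$. Compare with the paper, which spends essentially its entire proof on this point: it introduces $g(K)=(K+1/2)^{3/2}/\bigl(\sqrt{K+1}\,(K+1/4)\bigr)$, proves $g$ is decreasing with limit $1$ by differentiating $\log g$, uses that to get a rational lower bound on the ratio valid for $K>7$, reduces to comparing $K^2+3K+2$ against $K^2+3K\tfrac{K}{K+1/8}+\tfrac94\tfrac{K^2}{(K+1/8)^2}$, and then verifies $1\le K\le 7$ by direct computation. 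Your route is viable --- numerically $L_K>R_K'$ holds for all $K\ge2$ --- but until the squaring is carried out with an explicit threshold and the cases below it are checked from the definition of $S_K$, the proof is incomplete at its crux.
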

\par\noindent
\begin{proof}
Let $S_K=\sum_{i=1}^Ki^{1/2}$. Then  using~\eqref{eq:inthigh} from Proposition~\ref{prop:ibounds},
\begin{align*}
\frac{\rho_{K+1}(1/2\giv1)}{\rho_K(1/2\giv1)}
&=\frac{ (K+1)(K+2)S_K^2}{K^2(S_{K}+\sqrt{K+1})^2}
=\frac{ (K+1)(K+2)}{K^2\bigl(1+\frac{\sqrt{K+1}}{S_K}\bigr)^2}\notag\\
&>\frac{ (K+1)(K+2)}{K^2}
\Bigm/
\Bigl(1+\frac{\sqrt{K+1}}{ ((K+1/2)^{3/2}-2^{-3/2})/(3/2)}\Bigr)^{2}\notag\\
&=(K+1)(K+2)\Bigm/\Bigl(K+\frac32\frac{K}{ f(K) -2^{-3/2}/\sqrt{K+1}}\Bigr)^{2},
\end{align*} 
for
$$
f(K)=(K+1/2)^{3/2}/\sqrt{K+1}.
$$

Let $g(K)=f(K)/(K+1/4)$. We easily find that $\lim_{K\to\infty}g(K)=1$.
Also $g$ is monotone decreasing in $K$ because the derivative of $\log(g(K))$
with respect to $K$ is $-(3/16)[(K+1/2)(K+1/4)(K+1)]^{-1}$.
Therefore, if $K>7$,
\begin{align}\label{eq:thatratio}
\frac{\rho_{K+1}(1/2\giv1)}{\rho_K(1/2\giv1)}
&>(K+1)(K+2)\Bigl(K+\frac32\frac{K}{ g(K)(K+1/4)-2^{-3/2}/\sqrt{8}}\Bigr)^{-2}\notag\\
&>(K+1)(K+2)\Bigl(K+\frac32\frac{K}{ K+1/4-2^{-3/2}/\sqrt{8}}\Bigr)^{-2}\notag\\
&=\frac{ (K+1)(K+2)}{\bigl(K+\frac32\frac{K}{ K+1/8}\bigr)^2}.
\end{align}
The numerator in~\eqref{eq:thatratio} is $K^2+3K+2$
while the denominator is
$$K^2+3K\frac{K}{K+1/8}+\frac94\frac{K^2}{(K+1/8)^2}.$$
The numerator is larger than the denominator, establishing the theorem
for $K>7$.  For $1\le K\le 7$
we compute  directly that 
$$\frac{\rho_{K+1}(1/2\giv1)}{\rho_K(1/2\giv1)}>1.0038.\qedhere$$
\end{proof}
\smallskip

\begin{proof}[Proof of Theorem~\ref{thm:sqrtrule}]
Applying Lemmas~\ref{lem:supovery} and~\ref{lem:monok},
\begin{align*}
\sup_{1\le K<\infty}\,\sup_{0\le y\le 1}\rho_K\Bigl(\frac12\bigm| y\Bigr)
=\lim\limits_{K\to\infty}\rho_K\Bigl(\frac12\bigm| 1\Bigr)
=\lim_{K\to\infty}\frac{K^2(K+1)/2}
{\bigl(\sum_{i=1}^Ki^{1/2}\bigr)^2}. 
\end{align*}
Using the integral bounds~\eqref{eq:inthigh} and~\eqref{eq:intlow} from Proposition~\ref{prop:ibounds}
we can show that the above limit is $9/8$, establishing equation~\eqref{eq:ninebyeight}.
Next, if $x<1/2$, then 
\begin{align*}
\sup_{0\le y\le 1}\rho_K(x\giv y)=\rho_K(x\giv1)
&=\frac{\bigl(\sum_{i=1}^Ki^{2x-1}\bigr)\bigl(\sum_{i=1}^Ki\bigr)}
{\bigl(\sum_{i=1}^Ki^{x}\bigr)^2}\\
&=
\frac{K(K+1)}2 
\frac{\bigl(\sum_{i=1}^Ki^{2x-1}\bigr)}{\bigl(\sum_{i=1}^Ki^{x}\bigr)^2}. 
\end{align*}
Ignoring the factor $K(K+1)$ and taking the derivative with
respect to $x$ yields
\begin{align}\label{eq:thatderiv}
\frac{\sum_{i=1}^K\sum_{j=1}^K i^xj^{2x-1}(\log(j)-\log(i))}
{\bigl(\sum_{i=1}^Ki^{x}\bigr)^3}.
\end{align}
The numerator in~\eqref{eq:thatderiv} is of the form $\sum_{\ell=1}^K\eta_\ell\log(\ell)$
where $\sum_{\ell=1}^K\eta_\ell=0$.
That quantity $\eta_\ell$ is $\sum_{i=1}^K(i^x\ell^{2x-1}-\ell^xi^{2x-1})$ 
which is a decreasing function of $\ell$ because $x<1/2$.
Next,  because $\log(\ell)$ is non-negative and increasing in $\ell$
we find that the numerator in~\eqref{eq:thatderiv} is negative and 
hence, so is the whole expression.
Therefore $\rho_K(x\giv 1)$ is
a decreasing function of $x$ making $\rho_K(x\giv 1)>\rho_K(1/2\giv1)$
for $x<1/2$. This establishes~\eqref{eq:halfisminimax} for $x<1/2$ and
the case of $x>1/2$ is similar and slightly easier.
\end{proof}

\section{Robustness}\label{sec:robust}

It is not reasonable to suppose that even the more general working model
$\var(\hat\mu_k)\propto k^{-y}$
would hold exactly.  Using the square root rule
$$
\var(\hat\mu) = \sum_{k=1}^k k\var(\hat\mu_k)\Bigm/\Biggl(\,\sum_{k=1}^Kk^{1/2}\Biggr)^2
$$
is a continuous function of $\var(\hat\mu_k)$.
Then small perturbations from a power law make a small difference.
Also a positive function of $k$ that decreases mildly and steadily and convexly as $k$ increases can
be expected to be highly correlated with $k^{-y}$ for some $y$.
The inefficiency of the square root rule in this more general setting is
\begin{align}\label{eq:genrho}
\rho = \sum_{k=1}^K k\var(\hat\mu_k)\sum_{k=1}^K\var(\hat\mu_k)^{-2}\Bigm/\biggl(\,\sum_{k=1}^Kk^{1/2}\biggr)^2.
\end{align}

One plausible alternative to the power law is that $\var(\hat\mu_k)$ might make
steady progress and then plateau.  
An example of that would be $\var(\hat\mu_k) = \min(k,6)^{-1}$
for $k=1,\dots,10$.  We can compute $\rho$ of~\eqref{eq:genrho}
directly in this case and find that using $\omega_k\propto k^{1/2}$
would increase variance by a factor of just under $1.04$ compared to using the optimal weights.
If we suppose that there are $k_1\in\{1,\dots,100\}$ stages with variance decreasing
proportionally to $k^{-1}$ followed by a plateau of $k_2\in\{1,\dots,100\}$ steps with variance proportional
to $(k_1+1)^{-1}$ then the square root rule never raises variance by more than $1.121$ over the optimal combination.

A more general form of mild improvement
has $\sigma^2_k\ge1/k$ to rule out unreasonably good performance,
$\sigma^2_{k+1}\le\sigma^2_{k}$ to model improvement,
and $\sigma^2_k-\sigma^2_{k+1}\ge \sigma^2_{k+1}-\sigma^2_{k+2}$
to model diminishing returns.
Taking $\sigma^2_1=1$ and a fixed $K$, these conditions yield
a convex set of vectors $(\sigma_1^2,\dots,\sigma^2_K)$.
It would be interesting to know the largest inefficiency $\rho$
over that set of possibilities but this $\rho$ is not a concave function
of the $\sigma^2_k$ values and finding the exact worst case is outside the scope of this article.

We can sample the space of possible values by recursively taking $\sigma^2_1=1$,
$\sigma^2_2\sim U[1/2,\sigma^2_1]$, and for $2\le i\le K$,
$$
\sigma^2_{k}\sim  U\Bigl[ \min(1/k,2\sigma^2_{k-1}-\sigma^2_{k-2}),\, \sigma^2_{k-1}\Bigr].
$$
In $10^6$ simulations with $K=5$ the inefficiency was worse than $9/8$ only 18 times
and the worst value was about $1.134$.
In $10^6$ simulations with $K=10$ the inefficiency was worse than $9/8$ 1904 times
and the worst value was about $1.297$.
In $10^6$ simulations with $K=20$ the inefficiency was worse than $9/8$ 101 times
and the worst value was about $1.263$.
There is not much to lose in weighting proportionally to $k^{1/2}$ in this diminshing
returns setup.

If the progress is not steady and convex, then the square root rule can be more inefficient
than described above.
If the variance remains flat for the first $k_1$ steps and then decreases sharply,
the square root rule is less efficient.  For instance if the first $3$ iterations have unit
variance and the next $10$ have variance $0.01$, then the square root rule raises variance
by about $6.37$ fold over the optimal weights.
\cite{mixbeta} advocate putting zero weight on the
first few iterations if such an initial transient is suspected.

\section{Generalization}\label{sec:gene}

We can generalize some aspects of the square root rule
to other power laws.
Suppose that the true rate parameter $y$ is known to satisfy
$L\le y\le U$ for $0\le L\le U<\infty$.
We can then work with $\omega_k\propto k^x$ for
$x=M\equiv (L+U)/2$.
First we recall the definition of $\rho_K$ from~\eqref{eq:ineff}:
\begin{align*}
\rho_K(x\giv y) 
=
\frac{\bigl(\sum_{i=1}^Ki^{2x-y}\bigr)\bigl(\sum_{i=1}^Ki^{y}\bigr)}
{\bigl(\sum_{i=1}^Ki^{x}\bigr)^2}. 
\end{align*}
Then $2M-U=L$ and 
\begin{align}\label{eq:ineffgen}
\rho_K(M\giv U) = \frac{\bigl(\sum_{i=1}^Ki^{L}\bigr)\bigl(\sum_{i=1}^Ki^{U}\bigr)}
{\bigl(\sum_{i=1}^Ki^{M}\bigr)^2}. 
\end{align}

\begin{lemma}\label{lem:supoverygen}
For $\rho_K(x\giv y)$ given by equation~\eqref{eq:ineff},
with $K\ge1$ and $0\le L\le x\le U<\infty$, 
$$\sup_{L\le y\le U} \rho_K(x\giv y)=
\begin{cases}
\rho_K(x\giv U), & x\le M\\
\rho_K(x\giv L), & x\ge M,
\end{cases}
$$
for $M=(L+U)/2$. 
\end{lemma}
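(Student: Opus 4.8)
The plan is to follow the proof of Lemma~\ref{lem:supovery} almost verbatim, replacing the endpoints $0$ and $1$ by $L$ and $U$ and the midpoint $1/2$ by $M=(L+U)/2$. The first thing I would observe is that the convexity computation in that earlier proof never used the restriction $x,y\in[0,1]$: the identity
$$
\frac{\partial^2}{\partial y^2}\rho_K(x\giv y)=C^{-2}\sum_{i=1}^K\sum_{j=1}^Ki^{2x-y}j^y(\log(i)-\log(j))^2,\qquad C=\sum_{i=1}^Ki^x,
$$
is valid for all real $x$ and $y$. Hence for $K\ge2$ the map $y\mapsto\rho_K(x\giv y)$ is strictly convex on all of $\real$, so its supremum over the compact interval $[L,U]$ is attained at an endpoint, i.e.\ $\sup_{L\le y\le U}\rho_K(x\giv y)\in\{\rho_K(x\giv L),\rho_K(x\giv U)\}$. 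The case $K=1$ is immediate since $\rho_1(x\giv y)=1$ identically.

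Next I would record the reflection symmetry. Swapping $y$ with $2x-y$ merely interchanges the two sums $\sum_{i=1}^K i^{2x-y}$ and $\sum_{i=1}^K i^{y}$ in~\eqref{eq:ineff}, so $\rho_K(x\giv 2x-y)=\rho_K(x\giv y)$ for every $x,y$. Together with strict convexity this forces the unique minimizer to be $y=x$, so $\rho_K(x\giv\cdot)$ is decreasing on $(-\infty,x]$ and increasing on $[x,\infty)$. This is the only structural fact I need beyond convexity.

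The selection between the two endpoints would then be a short case analysis turning on the single equivalence $2x-U\ge L\iff x\ge M$. For $x\ge M$ I would reflect the right endpoint: $\rho_K(x\giv U)=\rho_K(x\giv 2x-U)$, and since $x\le U$ gives $2x-U\le x$ while $x\ge M$ gives $2x-U\ge L$, both $L$ and $2x-U$ lie in the decreasing branch $(-\infty,x]$ with $L\le 2x-U$; monotonicity yields $\rho_K(x\giv L)\ge\rho_K(x\giv 2x-U)=\rho_K(x\giv U)$, so the supremum is $\rho_K(x\giv L)$. The case $x\le M$ is the mirror image: I would reflect $L$ to $2x-L$, use $x\ge L$ and $x\le M$ to place both $U$ and $2x-L$ in the increasing branch $[x,\infty)$ with $2x-L\le U$, and conclude $\rho_K(x\giv U)\ge\rho_K(x\giv L)$. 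At $x=M$ the reflection sends $U$ to $L$, so the two endpoint values coincide and either case applies.

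I do not anticipate a genuine obstacle here: the generalization is essentially cosmetic, because neither the convexity identity nor the reflection symmetry depended on the particular interval $[0,1]$, and finiteness of $U$ is all that the compactness argument requires. The only point that needs the smallest bit of care is verifying that the reflected endpoint lands on the correct side of the minimizer $y=x$ and on the correct side of the other endpoint, which is exactly the equivalence $2x-U\ge L\iff x\ge M$; once that is in hand the monotonicity of the two branches does the rest.
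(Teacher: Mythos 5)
Your proposal is correct and follows essentially the same route as the paper, which proves this lemma by noting that the argument for Lemma~\ref{lem:supovery} carries over verbatim: strict convexity of $y\mapsto\rho_K(x\giv y)$ (valid for all real $x,y$) plus the reflection symmetry $\rho_K(x\giv 2x-y)=\rho_K(x\giv y)$, with the endpoint selection governed by $2x-U\ge L\iff x\ge M$. Your write-up simply makes explicit the details the paper leaves implicit, and correctly identifies that finiteness of $U$ is all that is needed beyond the original interval $[0,1]$.
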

\begin{proof}
The proof is the same as for Lemma~\ref{lem:supovery}
because $\rho_K(x\giv y)$ is convex in $y$ for any $K\ge1$
and any $x$ and $\rho_K(x\giv 2x-y)=\rho_K(x\giv y)$. 
\end{proof}

The inequalities in Lemma~\ref{lem:monok} are rather delicate 
and we have not extended them to the more general setting. 
Equation~\eqref{eq:ineffgen} is easy to evaluate for
integer values of $L$, $M$ and $U$. For more general
values, some sharper tools than the integral bounds in this
paper are given by~\cite{burr:talb:1984} who make a detailed
study of sums of powers of the first $K$ natural numbers.
We do see numerically that $\rho_K(M\giv U)$ is nondecreasing
with $K$ in every instance we have inspected.  
Using Proposition~\ref{prop:ibounds}, 
we can easily find the asymptotic inefficiency
$$
\lim_{K\to\infty} \rho_K(M\giv U)  = 
\lim_{K\to\infty} 
\frac{K^L}{L+1}\frac{K^U}{U+1}\Bigm/\Bigl(\frac{K^M}{M+1}\Bigr)^{2}
=\frac{(M+1)^2}{(L+1)(U+1)}.
$$

In cases with $L=0$ and hence $M=U/2$ we get
$(U/2+1)^2/(U+1)$. For instance, an upper bound at the rate $y=U=2$
corresponding roughly to asymptotic 
accuracy of scrambled net integration \citep{smoovar} leads
to $x=M=1$ and an asymptotic inefficiency of at most 
$$
\frac{(1+1)^2}{(0+1)(2+1)}=\frac43.
$$

Some references in Section~\ref{sec:plausible}
describe  problems where the adaptive importance
sampling variance converges exponentially to zero.
It is reasonable to expect that 
each estimate $\hat\mu_k$ will require a large number $n$
of observations to fit a complicated integrand
and that $K$ will then be not too large.

Suppose that $\var(\hat\mu_k)=\tau^2\exp(-yk)$ for some $y>0$.
Then the desired combination is 
$$
\hat\mu(y) = \frac{ \sum_{i=1}^K\exp(iy)\hat\mu_i}{\sum_{i=1}^K\exp(iy)}. 
$$
Not knowing $y$ we use 
$$
\hat\mu(x) = \frac{ \sum_{i=1}^K\exp(ix)\hat\mu_i}{\sum_{i=1}^K\exp(ix)},
$$
for some $x>0$.
The inefficiency of our estimate is now
\begin{align*}
\gamma_K(x\giv y) 
&
\equiv
\frac{ \sum_{i=1}^K\exp((2x-y)i)  \sum_{i=1}^K\exp(yi) }
{\bigl( \sum_{i=1}^K\exp(xi)\bigr)^2}\\[.5ex]
&= 
\Bigl( \frac{e^{2x-y}(e^{K(2x-y)}-1)}{e^{2x-y}-1}\Bigr) 
\Bigl( \frac{e^y(e^{Ky}-1)}{e^y-1}\Bigr) 
\Bigm/
\Bigl( \frac{e^x(e^{Kx}-1)}{e^x-1}\Bigr)^2.
\end{align*}
If $x=y/2$ then the first factor in the numerator is $K$.
It can be disastrously inefficient to use $x\ll y$.
Some safety is obtained in the $x\to\infty$ limit where
$\hat\mu =\mu_K$.  In that limit we consider
using the final and presumably best estimate to
be a sensible default. Then
\begin{align*}
\lim_{x\to\infty}\gamma_K(x\giv y)  =  \frac{e^y(1-e^{-Ky})}{e^y-1} \le\frac{e^y}{e^y-1}.
\end{align*}

We have no direct experience with problems having geometric convergence.
For sake of illustration,
if the variance is halving at each iteration, then $y=\log(2)$
and then taking $x\to\infty$ is inefficient by at most a factor of $2$.
Repeated ten-fold variance reductions correspond to $y = \log(10)$
and a limiting inefficiency of at most $10/9\doteq 1.11$.
The greatest inefficiency from using only the final iteration arises in
the limit $y\to0$ where the factor is $K$.  In this setting, the user is not
getting a meaningful exponential convergence and even there the loss factor
is at most $K$ and, as remarked above, that $K$ is not likely to be large.

We have assumed that $n$ is the same at every iteration.
It is possible to take a varying number $n_k$ of observations at iteration $k$.
\cite{zhan:1996} obtains his convergence rates for $K=2$
assuming  that $n_1$ and $n_2$ are chosen in an asymptotic
regime where $n_1/(n_1+n_2)$ approaches a limit
in the interior of the interval $(0,1)$.
Somewhat unequal values of $n_1$ and $n_2$ optimized the lead constant.

If $\var(\hat\mu_k) = \sigma_k^2/n_k$ with $\sigma^2_k\propto k^{-y}$
for $0\le y\le 1$ and $n_k\propto k^{\alpha}$ then $\var(\hat\mu_k)\propto k^{-y-\alpha}$.
We might then be able to use the more general bounds above with
$L=\alpha$ and  $U=1+\alpha$.
Conceivably, knowing $\sigma^2_k$ would let one tune $n_k$. However
changes to $n_k$ are likely to change $\sigma^2_k$, so that problem is circular
and solving that is outside the scope of this article.

\section{Conclusions}\label{sec:conc}

Some adaptive importance samplers can have geometric convergence
but many other settings will not converge that quickly. 
A simple rule weighing the estimate at stage $k$ proportionally
to $k^{1/2}$ gives an unbiased and nearly efficient estimate 
under a wide range of conditions and it also lets the
user avoid trying to model a random linear combination of $\hat\mu_k$.
An interesting extension would be to specify a well-motivated convex
set of values for $(1,\sigma_2^2/\sigma_1^2,\dots,\sigma^2_K/\sigma^2_1)$
and find a set of weights $\omega_k\ge0$ with $\sum_k\omega_k=1$
that minimizes the worst case inefficiency
$$
\max 
\sum_{k=1}^K\omega_k^2\var(\hat\mu_k)\sum_{k=1}^K\var(\hat\mu_k)^{-2}$$
where the maximum is taken over variances with ratios in the aforementioned
convex set.

\section*{Acknowledgments}
This work was supported by the US NSF
under grants DMS-1521145 and IIS-1837931.
We thank two referees and an associate editor for some
extremely valuable comments.

\bibliographystyle{apalike}
\bibliography{rare}
\end{document}